\tikzset{cross/.style={cross out, draw=black, minimum size=2*(#1-\pgflinewidth), inner sep=0pt, outer sep=0pt},
cross/.default={1pt}}
\newtheorem{thm}{Theorem}[section]
\newtheorem{cor}[thm]{Corollary}
\newtheorem{prop}[thm]{Proposition}
\theoremstyle{definition}
\newtheorem{defi}[thm]{Definition}
\theoremstyle{remark}
\newtheorem{rem}[thm]{Remark}
\numberwithin{equation}{section}
\definecolor{esperance}{rgb}{0.0,0.5,0.0}
\DeclareMathOperator{\diam}{diam}
\newcommand{\Ga}{\Gamma}
\newcommand{\del}{\delta}
\newcommand{\Del}{\Delta}
\newcommand{\lam}{\lambda}
\newcommand{\Lam}{\Lambda}
\newcommand{\eps}{\epsilon}
\newcommand{\cD}{\mathcal{D}}
\newcommand{\cP}{\mathcal{P}}
\newcommand{\bR}{\mathbb{R}}
\newcommand{\bZ}{\mathbb{Z}}
\newcommand{\bN}{\mathbb{N}}
\newcommand{\bT}{\mathbb{T}}
\newcommand{\SL}{\operatorname{SL}}
\newcommand\ul[1]{\underline{#1}}
\newcommand\on[1]{\operatorname{#1}}
\newcommand\mb[1]{\mathbf{#1}}
\newcommand\tb[1]{\textbf{#1}}
\newcommand{\Supp}{\on{Supp}}
\newcommand{\onto}{\xymatrix{\ar@{>>}[r]&}}
\newcommand{\eq}[1]
{
\begin{equation*}
{#1}
\end{equation*}
}
\newcommand{\eqlabel}[2]
{
\begin{equation}
{#2}\label{#1}
\end{equation}
}
\newcommand*{\rom}[1]{\expandafter\@slowromancap\romannumeral #1@}
\begin{document}

\title[High entropy measures with escape of mass]{High entropy measures on the space of lattices with escape of mass}

\date{}

\author{Taehyeong Kim}
\address{Department of Mathematics, Brandeis University, Waltham MA}
\email{taehyeongkim@brandeis.edu}

\thanks{The author was supported by the ERC grant HomDyn, ID 833423.}


\keywords{}

\def\thefootnote{}
\footnote{2020 {\it Mathematics
Subject Classification}: Primary 37A35; Secondary 28D20, 22D40.}   
\def\thefootnote{\arabic{footnote}}
\setcounter{footnote}{0}

\begin{abstract}
For any diagonal element $a$ with two eigenvalues, we construct a sequence of $a$-invariant probability measures on the space of unimodular lattices with high entropy but converging to the zero measure. This extends the result of Kadyrov [Ergodic Theory Dynam. Systems, \textbf{32}(1) (2012)].
\end{abstract}
\maketitle
\section{Introduction}
Let $m,n$ be positive integers. Consider the homogeneous space $X_{m+n}=\SL_{m+n}(\bR)/\SL_{m+n}(\bZ)$, which can be identified with the space of unimodular lattices in $\bR^{m+n}$. We consider the diagonal flow
\[
a_t = \begin{pmatrix} e^{t/m}I_m & \\ & e^{-t/n}I_n \end{pmatrix} \quad\text{ for } t\in\bR,
\]
which naturally acts on $X_{m+n}$ by left multiplication. We let $a=a_1$ be the time-one map for the diagonal flow.

The following non-escape of mass result was proved in \cite{KKLM17} using \textit{Margulis functions} which originated in \cite{EMM98}.
\begin{thm}\cite{KKLM17}\label{Thm_KKLM}
For any $h>0$ and any sequence $(\mu_k)_{k\geq1}$ of $a$-invariant probability measures on $X_{m+n}$ with $h_{\mu_k}(a)\geq h$, any weak$^\ast$ limit $\mu$ of the sequence satisfies
\[
\mu(X_{m+n})\geq h-(m+n-1).
\]
\end{thm}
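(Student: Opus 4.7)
The plan is to prove Theorem \ref{Thm_KKLM} by establishing the following entropy dichotomy: for every $\eta>0$ there is a compact set $K_\eta\subset X_{m+n}$ such that every $a$-invariant probability measure $\mu$ on $X_{m+n}$ satisfies
$$h_\mu(a)\leq(m+n-1)+\mu(K_\eta)+\epsilon(\eta),$$
with $\epsilon(\eta)\to 0$ as $\eta\to 0$. Applying this to each $\mu_k$ and rearranging yields $\mu_k(K_\eta)\geq h-(m+n-1)-\epsilon(\eta)$. Since $K_\eta$ is compact, weak$^\ast$ convergence gives $\mu(K_\eta)\geq\limsup_k\mu_k(K_\eta)\geq h-(m+n-1)-\epsilon(\eta)$, and sending $\eta\to 0$ concludes $\mu(X_{m+n})\geq h-(m+n-1)$.

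To establish the entropy dichotomy, I would use a Margulis function on $X_{m+n}$ in the style of Eskin--Margulis--Mozes, for instance $f_s(x)=\sum_V\on{covol}_x(V)^{-s}$ summed over nonzero proper rational subspaces of $\bR^{m+n}$, together with the associated drift inequality $\int f_s(aux)\,d\lambda(u)\leq c\,f_s(x)+b$ with $c<1$. These underlie the construction of a countable partition $\mathcal{P}$: atoms inside the compact core $K_\eta=\{f_s\leq R(\eta)\}$ are small Bowen-type boxes, while the atoms of $K_\eta^c$ form thin layers indexed by level sets of $f_s$. One arranges $\mathcal{P}$ so that $H_\mu(\mathcal{P})<\infty$ for any $a$-invariant $\mu$. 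By Kolmogorov--Sinai,
$$h_\mu(a)\leq\frac{1}{T}H_\mu\!\left(\bigvee_{j=0}^{T-1}a^{-j}\mathcal{P}\,\Big|\,\mathcal{P}\right)+o_T(1).$$

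The crux of the proof, and its main obstacle, is the combinatorial estimate on the number $N(P,T)$ of atoms of $\bigvee_{j=0}^{T-1}a^{-j}\mathcal{P}$ meeting a fixed atom $P$ of $\mathcal{P}$: one must show
$$N(P,T)\leq\begin{cases}e^{T(m+n)+o(T)}, & P\subset K_\eta,\\ e^{T(m+n-1)+o(T)}, & P\subset K_\eta^c.\end{cases}$$
The first bound is standard, reflecting the full sum of positive Lyapunov exponents of the $a$-action. The second bound --- the key saving of one unit in the cusp --- reflects the fact that a lattice deep in the cusp has a distinguished short rational subspace $V$; the directions in the unstable horospherical subgroup that stabilize $V$ do not generate new refinement atoms, effectively removing one dimension from the Bowen ball count. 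Making this rigorous is the delicate part: it requires careful tracking of how unstable translates interact with the Mahler height $f_s$, using the drift inequality to control how the short-vector geometry evolves over time $T$.

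Granting this combinatorial estimate and the bound $H_\mu(\mathcal{Q}\mid\mathcal{P})\leq\sum_P\mu(P)\log N(P,T)$, we get
$$h_\mu(a)\leq(m+n)\mu(K_\eta)+(m+n-1)\mu(K_\eta^c)+o_T(1)=(m+n-1)+\mu(K_\eta)+o_T(1),$$
and letting $T\to\infty$ yields the entropy dichotomy and hence the theorem.
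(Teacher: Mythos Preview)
The paper does not prove Theorem~\ref{Thm_KKLM}; it is quoted from \cite{KKLM17} and only the one-line remark ``proved in \cite{KKLM17} using \textit{Margulis functions} which originated in \cite{EMM98}'' is given. There is therefore no proof in the paper to compare your proposal against.

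That said, your sketch is broadly in the spirit of the actual argument in \cite{KKLM17}: a height function of Eskin--Margulis--Mozes type, a contraction (drift) inequality, and an atom-counting estimate that saves one unit of exponent in the cusp. A few cautions about the sketch itself. First, the entropy dichotomy as you wrote it, $h_\mu(a)\leq (m+n-1)+\mu(K_\eta)+\epsilon(\eta)$, cannot literally hold for a fixed $\eta$ with $\epsilon(\eta)\to 0$: take $\mu=m_X$, which has $h_{m_X}(a)=m+n$ and $\mu(K_\eta)<1$ for any compact $K_\eta$, so the inequality would force $\epsilon(\eta)>1-m_X(K_\eta)>0$ bounded away from $0$ for each $\eta$. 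What one actually needs is an inequality of the shape $h_\mu(a)\leq (m+n-1)\,\mu(K_\eta^c)+(m+n)\,\mu(K_\eta)+\epsilon(\eta)$ with $K_\eta$ \emph{exhausting} $X$ as $\eta\to 0$; then $\epsilon(\eta)$ can go to $0$ while the inequality remains compatible with Haar measure. Your final displayed line has this correct form, but the preliminary statement of the dichotomy and the final limiting step (``sending $\eta\to 0$'') are in tension: if $K_\eta$ exhausts $X$, then $\mu(K_\eta)\to\mu(X)$ and the argument closes, but you should state the dichotomy that way from the start. Second, the cusp saving you invoke---that a short rational subspace absorbs one unstable direction---is morally right but is the genuinely hard part of \cite{KKLM17}; your proposal acknowledges this but does not supply the mechanism, so as written it is an outline rather than a proof.
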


A similar statement was first proved in \cite{ELMV12} for the $\SL_2(\bR)/\SL_2(\bZ)$ case. Since then, generalizations were considered in \cite{EKP15, Mor22} for other rank $1$ cases, as well as in \cite{EK12, Mor} for higher rank cases. See also \cite{DKMS} for ``almost" invariant measures.

Note that the maximal entropy for the transformation $a$ on $X_{m+n}$ is $m+n$ (see e.g. \cite{EL,KKLM17}) and Theorem \ref{Thm_KKLM} is nontrivial for $h\in(m+n-1,m+n]$. Kadyrov, Kleinbock, Lindenstrauss, and Margulis \cite{KKLM17} conjectured that Theorem \ref{Thm_KKLM} is sharp in the sense that for any $h \in [m+n-1, m+n]$ there should exist a sequence of probability invariant measures $(\mu_k)_{k\geq 1}$ with $\lim_{k\to\infty} h_{\mu_k}(a)=h$ such that the limit measure $\mu$ satisfies $\mu(X_{m+n})=h-(m+n-1)$. This was proved in \cite{Kad12} when $\min(m,n)=1$. See also \cite{KP17} for a result of this type for arbitrary rank-one Lie groups, and \cite{RV19} for the geodesic flow on negatively curved geometrically finite manifolds.

In this article, we prove that the conjecture is true when $(m,n)\neq (1,1)$, hence combining with \cite{Kad12} it is true for every $m$ and $n$. 
\begin{thm} \label{Thm_main}
There exists a sequence of $a$-invariant probability measures $(\mu_k)_{k\geq 1}$ on $X_{m+n}$ with $\lim_{k\to\infty}h_{\mu_k}(a) = m+n-1$ such that its weak$^\ast$ limit is the zero measure.
\end{thm}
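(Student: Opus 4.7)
Without loss of generality assume $m\ge 2$, since the case $n\ge 2$ is symmetric and the excluded case $(m,n)=(1,1)$ is already covered by \cite{Kad12}. Write $\lam=\tfrac{1}{m}+\tfrac{1}{n}$ for the unique positive Lyapunov exponent of $a$, which has multiplicity $mn$ on the unstable horospherical $U^+$ (so that the entropy of the Haar measure is $mn\cdot\lam=m+n$). The goal is to construct, for each large $T$, an $a$-invariant probability measure $\mu_T$ concentrated arbitrarily deep in the cusp with entropy close to $m+n-1$.

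For $T>0$ large, let $g_T=\on{diag}(e^{-T},\,e^{T/(m+n-1)}I_{m+n-1})\in\SL_{m+n}(\bR)$ and $\Lam_T=g_T\bZ^{m+n}$; this lattice has the unique short vector $e^{-T}e_1$ and lies deep in the cusp. Let $U^+\subset\SL_{m+n}(\bR)$ be the $(mn)$-dimensional abelian unipotent subgroup with Lie algebra supported in the upper-right $m\times n$ block. Every $u\in U^+$ fixes $e_1$, so $u\Lam_T$ still has $e^{-T}e_1$ as its short vector. Choosing $N_T=\lfloor m(T-L_T)\rfloor$ for a slowly growing auxiliary parameter $L_T\to\infty$, the orbit segment $\{a^ju\Lam_T:0\le j<N_T,\,u\in U^+\}$ stays in the cusp $\{\alpha_1\ge e^{L_T}\}$ throughout.

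Pick a carefully calibrated subset $W_T\subset U^+$, put the normalized Haar-type measure $\rho_T$ on the box $W_T\cdot\Lam_T\subset X_{m+n}$, and define
\[
\nu_T=\frac{1}{N_T}\sum_{j=0}^{N_T-1} a^j_{*}\rho_T.
\]
Then $\nu_T$ is $O(1/N_T)$-approximately $a$-invariant; a standard diagonal / weak$^\ast$-compactness argument (or, alternatively, a periodic-orbit approximation) extracts from $\{\nu_T\}$ honest $a$-invariant probability measures $\mu_k$ still concentrated in the cusp. Since each $\mu_k$ is supported in $\{\alpha_1\ge e^{L_{T_k}}\}$ with $L_{T_k}\to\infty$, the escape of mass $\mu_k\to 0$ weakly is immediate.

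The main obstacle is matching the entropy. The upper bound $\limsup_k h_{\mu_k}(a)\le m+n-1$ follows from Theorem~\ref{Thm_KKLM} applied to the zero weak$^\ast$-limit. For the lower bound one computes the entropy of $\nu_T$ via a Ledrappier--Young / Pesin-type formula: it equals $\lam$ times an effective unstable dimension of $W_T$. Targeting $\lam\cdot\dim_{\on{eff}}(W_T)=m+n-1$ forces $\dim_{\on{eff}}(W_T)=(m+n-1)mn/(m+n)$, which is in general non-integral and strictly less than $mn$. Accordingly $W_T$ is constructed as a product of $\lfloor \dim_{\on{eff}}\rfloor$ full horospherical directions together with a Bernoulli-type subset of fractional dimension $\{\dim_{\on{eff}}\}$ in one additional direction, the remaining $U^+$-directions being suppressed. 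The availability of this last ``slack'' direction needs $\dim U^+=mn\ge 2$, which is precisely the hypothesis $(m,n)\ne(1,1)$. The crux of the argument is to verify this fractional-dimensional entropy lower bound rigorously, while simultaneously ensuring that $W_T$ respects the cusp-containment throughout $\{a^j:0\le j<N_T\}$ and that the passage from approximately to exactly $a$-invariant measures preserves both the escape and the entropy limits.
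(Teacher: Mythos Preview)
Your proposal has a genuine gap at the step where you ``extract honest $a$-invariant probability measures $\mu_k$ still concentrated in the cusp.'' The measures $\nu_T$ are only $O(1/N_T)$-approximately invariant, and to obtain an exactly $a$-invariant measure you must pass to a weak$^\ast$ limit along some sequence. But the only control you have is $\Supp\nu_T\subset\{\lam_1\le e^{-L_T}\}$, and this set is \emph{not compact}: it is a full neighbourhood of the cusp. Hence a weak$^\ast$ limit of the $\nu_T$ may lose all its mass, and there is no mechanism in your construction preventing this. Conversely, if for a fixed $T$ you try to average over arbitrarily long orbit segments (letting $N\to\infty$) so as to force exact invariance, then for $j>N_T$ the vector $e^{-T+j/m}e_1$ is no longer short and the orbit leaves the cusp, destroying the support condition. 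In short, your scheme gives either approximately invariant probability measures in the cusp, or exactly invariant sub-probability measures, but not both at once. The appeal to Ledrappier--Young for the entropy lower bound inherits the same problem: that formula applies to invariant measures, and you have not produced one with the required unstable dimension.

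The paper resolves precisely this tension. Using the variational principle of \cite{DFSU} with a suitable periodic standard template, it produces for each $\eps>0$ a set $E(\eps)\subset M_{m,n}(\bR)$ of Hausdorff dimension at least $mn-\tfrac{mn}{m+n}-\eps$ such that for every $A\in E(\eps)$ the \emph{entire} forward orbit $\{a_t u_A\bZ^{m+n}:t\ge t_\eps\}$ satisfies a \emph{two-sided} bound $\rho_\eps\le\lam_1\le\eta_\eps$. The set $\{\rho_\eps\le\lam_1\le\eta_\eps\}$ is compact by Mahler's criterion, so the Misiurewicz construction (maximal separated subsets of $E(\eps)$, then empirical averages) yields a weak$^\ast$ limit that is simultaneously $a$-invariant, a probability measure, supported in that compact set near the cusp, and of entropy $\ge m+n-1-O(\eps)$. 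Letting $\eps\to 0$ pushes $\eta_\eps\to 0$ and gives the escaping sequence. The lower bound on $\lam_1$ --- absent from your construction --- is exactly what prevents mass from escaping at the limit stage, and obtaining it requires the nontrivial input from parametric geometry of numbers; this is also where the hypothesis $(m,n)\neq(1,1)$ is actually used, via the template conditions, rather than merely to have a spare unstable direction.
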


\begin{cor} \label{Cor_main}
For any $h\in [m+n-1,m+n]$, there exists a sequence of $a$-invariant probability measures $(\nu_k)_{k\geq 1}$ on $X_{m+n}$ with $\lim_{k\to\infty}h_{\nu_k}(a)=h$ such that its weak$^\ast$ limit $\nu$ satisfies $\nu(X_{m+n})=h-(m+n-1)$.
\end{cor}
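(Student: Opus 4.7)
The plan is to deduce Corollary \ref{Cor_main} from Theorem \ref{Thm_main} by interpolating between the mass-losing sequence produced there and the Haar probability measure on $X_{m+n}$. Write $m_X$ for the $\SL_{m+n}(\bR)$-invariant probability measure on $X_{m+n}$; it is $a$-invariant (as $a$ has determinant one) and attains the maximal entropy $h_{m_X}(a)=m+n$ (see e.g.\ \cite{EL}).

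Given $h\in[m+n-1,m+n]$, I would set $\lambda \defn h-(m+n-1)\in[0,1]$ and, letting $(\mu_k)_{k\geq 1}$ be the sequence of Theorem \ref{Thm_main}, define
\[
\nu_k \defn (1-\lambda)\mu_k + \lambda\, m_X.
\]
Each $\nu_k$ is an $a$-invariant probability measure. Since Kolmogorov--Sinai entropy is affine under convex combinations of invariant measures,
\[
h_{\nu_k}(a) = (1-\lambda)h_{\mu_k}(a) + \lambda(m+n) \longrightarrow (1-\lambda)(m+n-1) + \lambda(m+n) = h.
\]
By Theorem \ref{Thm_main}, $\mu_k\wstar 0$, so for every $f\in C_c(X_{m+n})$ one has $\int f\,d\nu_k \to \lambda\int f\,dm_X$; hence $\nu_k \wstar \lambda\, m_X$, and the (unique) weak$^\ast$ limit has total mass $\lambda = h-(m+n-1)$, as required.

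No substantive obstacle arises: the construction is a one-line interpolation, and the only input beyond Theorem \ref{Thm_main} is the affinity of the entropy map on the set of invariant measures, together with the computation $h_{m_X}(a)=m+n$. The endpoints $h=m+n-1$ and $h=m+n$ correspond to $\lambda=0$ and $\lambda=1$ and reduce trivially to $\nu_k=\mu_k$ and $\nu_k\equiv m_X$ respectively.
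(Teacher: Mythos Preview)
Your argument is correct and essentially identical to the paper's own proof: the paper also interpolates by setting $\nu_k = (h-(m+n-1))m_X + (m+n-h)\mu_k$, invokes affinity of entropy (citing \cite[Theorem~2.33]{ELW}) together with $h_{m_X}(a)=m+n$, and observes that the weak$^\ast$ limit is $(h-(m+n-1))m_X$.
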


\textit{Structure of the paper.} 
In Section \ref{Sec2}, using the variational principle developed in \cite{DFSU}, we estimate the lower bound of the Hausdorff dimension for the set of matrices such that the diagonal orbit of the corresponding lattice stays in a certain compact region in $X_{m+n}$ near the cusp.
In Section \ref{Sec3}, we construct an $a$-invariant probability measure on $X_{m+n}$ with high entropy supported on that compact region near the cusp.
In Section \ref{Sec4}, we prove Theorem \ref{Thm_main} and Corollary \ref{Cor_main}.

\vspace{5mm}
\tb{Acknowledgments}. 
I am grateful to Jinho Jeoung for introducing this problem to me.
I also thank David Simmons for answering my question about the standard template, and the anonymous referees for their careful reading of this article and helpful comments.

\section{Dimension estimates using variational principle}\label{Sec2}
In this section, we will estimate the lower bound of the Hausdorff dimension for the set of matrices such that the diagonal orbit of the corresponding lattice stays in a certain compact region in $X_{m+n}$ near the cusp. We make use of the variational principle in the parametric geometry of numbers developed in \cite{DFSU}. 

For given positive integers $m$ and $n$, denote $d=m+n$.
For each $j=1,\dots,d$, let $\lam_j(\Lam)$ denote the $j$th successive minimum of a lattice $\Lam$ in $\bR^d$, i.e. the infimum of $\lam$ such that the set $\{\mb{r}\in\Lam:\|\mb{r}\|\leq \lam\}$ contains $j$ linearly independent vectors.
Given a matrix $A\in M_{m,n}(\bR)$, we define the \textit{successive minima function} $\mb{h}=\mb{h}_A=(h_1,\dots,h_d):[0,\infty)\to\bR^d$ of the matrix $A$ by
\[
h_i(t)=\log\lam_i (a_t u_A \bZ^d), \quad\text{where} \quad u_A = \begin{pmatrix}I_m & A \\ & I_n \end{pmatrix}.
\]
We use the following notation:
\[
[a,b]_\bZ = [a,b]\cap \bZ \quad \text{and}\quad (a,b]_\bZ = (a,b]\cap \bZ.
\]
Now we recall the definition of templates in \cite{DFSU}.
\begin{defi}[Template] An $m\times n$ \textit{template}\label{Def_Temp} is a piecewise linear map $\mb{f}:[0,\infty)\to\bR^d$ with the following properties:
\begin{enumerate}
\item $f_1\leq \cdots\leq f_d.$
\item\label{Temp_item_2} $-\frac{1}{n}\leq f_i'\leq \frac{1}{m}$ for all $i$.
\item For all $j=0,\dots,d$ and for every interval $I$ such that $f_j<f_{j+1}$ on $I$, the function
$\sum_{0<i\leq j}f_i$ is convex and piecewise linear on $I$ with slopes in the set
\[
\left\{\frac{L_{+}}{m}-\frac{L_{-}}{n}: L_{+}\in [0,m]_\bZ, L_{-}\in[0,n]_\bZ, L_{+}+L_{-}=j\right\}.
\] As a convention, we use $f_0 =-\infty$ and $f_{d+1}=+\infty$.
\end{enumerate}
\end{defi}

\begin{defi}[Contraction rate of a template] 
Let $\mb{f}$ be a template and $I$ be an open interval on which $\mb{f}$ is linear. 
An \textit{interval of equality} for $\mb{f}$ on $I$ is an interval $(p,q]_\bZ$ with $0\leq p<q\leq d$ such that
\[
f_p<f_{p+1}=\cdots=f_q < f_{q+1}\ \text{on}\ I.
\]
For $0\leq p<q\leq d$ with an interval of equality $(p,q]_\bZ$, we let $M_{\pm}(p,q)$ be the unique integers such that
\[
M_{+}+M_{-}=q-p\quad\text{and}\quad \sum_{i=p+1}^q f_i' = \frac{M_{+}}{m}-\frac{M_{-}}{n}\text{ on } I.
\]
Note that $M_{\pm}\geq 0$ by \eqref{Temp_item_2} of Definition \ref{Def_Temp} (see page 24 in \cite{DFSU}). We also let
\[
S_{+}  = \bigcup_{(p,q]_\bZ} (p,p+M_{+}(p,q)]_\bZ \quad\text{and}\quad
S_{-} = \bigcup_{(p,q]_\bZ} (p+M_{+}(p,q), q]_\bZ
\]
where the unions are taken over all intervals of equality for $\mb{f}$ on $I$. Note that $S_{+} \cap S_{-} =\varnothing$, $S_{+}\cup S_{-}=[1,d]_\bZ$, $\#S_{+}=m$, and $\#S_{-}=n$.
Define
\[\begin{split}
\del(\mb{f},I) &= \#\{(i_{+},i_{-})\in S_{+}\times S_{-}: i_{+}<i_{-}\};\\
\ul{\del}(\mb{f})&= \liminf_{T\to \infty} \frac{1}{T}\int_0^T \del(\mb{f},t) dt,
\end{split}\]
where $\del(\mb{f},t)$ is the piecewise constant function with value $\del(\mb{f},I)$ on $I$.
The value $\ul{\del}(\mb{f})$ is called the \textit{lower average contraction rate} of $\mb{f}$.
\end{defi}

The following \cite[Theorem 4.7]{DFSU} is a uniform version of the variational principle.
\begin{thm}[Uniform variational principle]\label{Thm_VP} 
For all $\eps>0$, there exists $C>0$ such that for every template $\mb{f}$,
\[
\dim_H(\cD(\mb{f},C)) \geq \ul{\del}(\mb{f})-\eps,
\]
where 
\[
\cD(\mb{f},C) = \{A\in M_{m,n}(\bR): \|\mb{h}_A-\mb{f}\|\leq C \}.
\]
\end{thm}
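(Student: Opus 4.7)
The proof I would attempt is a Frostman/Cantor construction, building a self-similar tree of matrices inside $\cD(\mb{f},C)$ and then applying the mass distribution principle. Fix $\eps>0$ and a template $\mb{f}$. I would choose a large time step $T_0=T_0(\eps)$ and a partition $0=t_0<t_1<t_2<\cdots$ with $t_{k+1}-t_k\approx T_0$ chosen so that on each slab $I_k=[t_k,t_{k+1}]$ the template is linear (with finitely many exceptions that contribute only a vanishing share of the time average). On each slab, the data $S_+(I_k)$, $S_-(I_k)$ and the integer $\delta(\mb{f},I_k)$ are defined and constant.

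The construction proceeds by induction on $k$. Start from a single matrix $A_0$ with $\|\mb{h}_{A_0}(0)-\mb{f}(0)\|\leq C/2$ (such $A_0$ exists by Mahler's criterion, since a lattice realising the prescribed successive minima can always be found). Assuming we have produced a finite set $\cF_k$ of matrices $A$ with $\|\mb{h}_A-\mb{f}\|_{[0,t_k]}\leq C$, each node $A\in\cF_k$ spawns children $A+B$, where the perturbation $B$ lies in the linear subspace
\[
V_k(A)=\spa\set{E_{i_+,i_-}(A)\colon i_+<i_-,\ i_+\in S_+(I_k),\ i_-\in S_-(I_k)}\subset M_{m,n}(\bR).
\]
Here $E_{i_+,i_-}(A)$ is a rank-one matrix expressed in the Minkowski basis of $a_{t_k}u_A\bZ^d$ which perturbs the $i_+$-th basis vector by a scalar multiple of the $i_-$-th. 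The crucial point is that $\dim V_k(A)=\delta(\mb{f},I_k)$, and that the scalar coefficients may range over a box of side $\asymp e^{-\alpha_k}$ with $\alpha_k$ tuned to the eigenvalue gap on $I_k$, while keeping $\|\mb{h}_{A+B}-\mb{f}\|_{[0,t_{k+1}]}\leq C$. A $e^{-t_{k+1}}$-separated net in this box yields roughly $e^{\delta(\mb{f},I_k)\,T_0}$ children per parent, mutually separated in $M_{m,n}(\bR)$ by a uniform constant times $e^{-t_{k+1}}$.

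Let $\cF_\infty=\bigcap_k \overline{\bigcup_{A\in\cF_k}B(A,e^{-t_k})}$ and place the uniform Bernoulli probability $\mu$ on $\cF_\infty$. Standard Frostman estimates give
\[
\mu(B(x,r))\lesssim r^{\,s_k}\quad\text{with}\quad s_k=\frac{\sum_{j<k}\delta(\mb{f},I_j)\,T_0}{t_k}-o_\eps(1),
\]
so the mass distribution principle produces $\dim_H \supp\mu\geq \liminf_k s_k\geq \ul{\del}(\mb{f})-\eps$ provided $T_0$ is taken large. Since $\supp\mu\subset \cD(\mb{f},C)$ for an absolute constant $C$ (depending only on $\eps$ and $d$), this gives the claim.

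The principal obstacle is the \emph{perturbation lemma} controlling $\mb{h}_{A+B}$: one must verify that perturbing $A$ within $V_k(A)$ only deforms the Minkowski basis in the intended way on all of $[0,t_{k+1}]$, and in particular that no unexpected short vector appears to drop some $h_i$ below $f_i-C$. This is where the definition of template is used in full strength: condition (3) of Definition \ref{Def_Temp}, together with the admissible slopes $L_+/m-L_-/n$, encodes exactly the sublattices whose covolumes could drop under the flow, and the normalisations prevent any such ``illegal'' drops. Making this quantitative uniformly in $k$ (independently of the template, in a way that yields the single constant $C=C(\eps)$ claimed) is the technical heart of the proof; everything else is bookkeeping of counts versus scales.
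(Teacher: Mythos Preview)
The paper does not prove this theorem; it is quoted verbatim as \cite[Theorem 4.7]{DFSU} and used as a black box. There is therefore no ``paper's own proof'' to compare against.

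That said, your outline is in the right spirit for how such results are actually established in \cite{DFSU}: one does build a Cantor-type tree of matrices whose successive-minima trajectories shadow the template, and the Hausdorff dimension is read off from child counts via the mass distribution principle. A few points in your sketch would need repair before it could be made rigorous. First, the separation and box scales are not $e^{-t_{k+1}}$ but rather governed by the expansion rate $e^{(1/m+1/n)t}$ of $a_t$ on the unipotent $u_A$; this affects the exponent bookkeeping throughout. Second, the perturbation space $V_k(A)$ is not literally spanned by rank-one matrices indexed by pairs $(i_+,i_-)$ from the Minkowski basis: the Minkowski basis vectors are in $\bR^{m+n}$, not in $M_{m,n}(\bR)$, and translating a perturbation of the lattice into a perturbation of $A$ requires projecting onto the $m\times n$ block and controlling the remaining coordinates, which is where much of the work in \cite{DFSU} lies. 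Third, and most seriously, the uniformity claim---a single $C=C(\eps)$ working for \emph{every} template---does not fall out of the inductive construction as you have written it; one must show that the constants in the perturbation lemma depend only on $d$ and the time step $T_0$, not on the particular slopes or equality pattern of $\mb f$ on each slab. In \cite{DFSU} this is handled by a careful case analysis of the possible local pictures of a template, and it is not obvious a priori.
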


In \cite[Section 9]{DFSU}, the following special template was mainly used to estimate the lower bound of the Hausdorff dimension of singular matrices. We also need that template for our purpose.
\begin{defi}[Standard template]
For $k\in \bZ_{\geq 0}$, fix $0\leq t_k <t_{k+1}$ and $\eps_k,\eps_{k+1}\geq 0$ and let $\Del t=\Del t_k = t_{k+1}-t_k$ and $\Del \eps=\Del \eps_k = \eps_{k+1}-\eps_k$. Assume that the following formulas hold:
\begin{align}
-\frac{1}{m}\Del t \leq \Del \eps &\leq \frac{1}{n}\Del t, \label{Eq_ST1} \\ 
\Del\eps \geq -\frac{n-1}{2n}\Del t \text{ if } m=1 \text{ and } &\Del\eps \leq \frac{m-1}{2m}\Del t \text{ if } n=1, \label{Eq_ST2}\\
\text{either } (n-1)(\frac{1}{n}\Del t -\Del \eps) \geq d\eps_k \text{ or } &(m-1)(\frac{1}{m}\Del t +\Del\eps) \geq d\eps_{k+1}. \label{Eq_ST3}
\end{align}
The \textit{standard template} defined by the two points $(t_k, -\eps_k)$ and $(t_{k+1},\eps_{k+1})$ is the partial template $\mb{f}:[t_k,t_{k+1}]\to\bR^d$ defined as follows:
\begin{itemize}
\item Let $g_1,g_2:[t_{k},t_{k+1}]\to\bR$ be piecewise linear functions such that $g_i(t_j) = -\eps_j$, and $g_i$ has two intervals of linearity: one on which $g_i ' =\frac{1}{m}$ and another on which $g_i' =-\frac{1}{n}$. For $i=1$ the latter interval comes first while for $i=2$ the former interval comes first. Finally, let $g_3=\cdots=g_d$ be chosen so that $g_1+\cdots+g_d =0$.
\item For each $t\in[t_k,t_{k+1}]$ let $\mb{f}(t)=\mb{g}(t)$ if $g_2 (t) \leq g_3 (t)$; otherwise let $f_1(t)=g_1(t)$ and let $f_2(t)=\cdots =f_d(t)$ be chosen so that $f_1+\cdots+f_d = 0$. 
\end{itemize}
Denote the standard template defined by $(t_k,-\eps_k)$ and $(t_{k+1},-\eps_{k+1})$ by $\mb{s}[(t_k,-\eps_k),(t_{k+1},-\eps_{k+1})]$.
\end{defi}

\begin{rem}
As explained in \cite[Definition 9.1 and Lemma 9.2]{DFSU}, the formulas \eqref{Eq_ST1}, \eqref{Eq_ST2}, and \eqref{Eq_ST3} are necessary to ensure the existence of the standard template.
In particular, we will choose $t_k = kt$ and $\eps_k = C>0$ with some large $t >0$ and some constant $C>0$ for all $k\geq 1$, hence $\Del\eps_k =0$. See the proof of Proposition \ref{Prop_dim}. Note that if $\Del t_k = t$ is large enough compared to the constant $C$, the formulas \eqref{Eq_ST1}, \eqref{Eq_ST2}, and \eqref{Eq_ST3} hold unless $(m,n)= (1,1)$. If $(m,n)=(1,1)$, it follows from \eqref{Eq_ST2} and \eqref{Eq_ST3} that $\eps_k = \eps_{k+1} =0$. In order to choose $\eps_k = C>0$, we should assume that $(m,n)\neq (1,1)$.
\end{rem}

\begin{prop}\label{Prop_dim} Suppose that $(m,n) \neq (1,1)$.
For any $\eps>0$, there are constants $\rho_{\eps},\eta_{\eps}>0$ and $t_\eps\in \bN$ such that
$$\dim_H \{A\in M_{m,n}(\bR) : \rho_\eps \leq \lam_1(a_t u_A\bZ^d) \leq \eta_\eps \text{ for all }t\geq t_\eps\} \geq mn-\frac{mn}{m+n}-\eps.$$
Moreover, $\rho_\eps, \eta_{\eps}$ converge to $0$ as $\eps \to 0$.
\end{prop}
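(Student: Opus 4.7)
The plan is to apply the uniform variational principle (Theorem~\ref{Thm_VP}) to a periodic standard template and to read off the bound on $\lam_1$ from the $L^\infty$-bound on $f_1$ provided by the template. Given $\eps > 0$, I will choose a depth $E = E(\eps) > 0$ and a period length $T = T(\eps) > 0$, both tending to $\infty$ as $\eps \to 0$ with $T/E \to \infty$; specifically, $E$ is taken large enough that the upper bound $e^{-E + C}$ on $\lam_1$ obtained below is as small as desired (where $C$ is the constant from Theorem~\ref{Thm_VP}), and $T$ is taken large enough to ensure (i) the DFSU conditions (9.1)--(9.3) for the existence of $\mathbf{s}[(kT, -E), ((k+1)T, -E)]$ hold (this is where the hypothesis $(m,n) \neq (1,1)$ enters, per the preceding remark), and (ii) the error term $O(E/T)$ below is at most $\eps/2$. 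Let $\mathbf{f} : [0, \infty) \to \bR^d$ be the concatenation of the standard templates $\mathbf{s}[(kT, -E), ((k+1)T, -E)]$ for $k = 0, 1, 2, \ldots$; since all block endpoints agree at $-E$, this yields a well-defined template. In both the collapsed and non-collapsed cases one has $f_1 = g_1$, and $g_1$ is piecewise linear with $g_1(kT) = -E$ and interior minimum $g_1(kT + nT/(m+n)) = -E - T/(m+n)$. Hence $f_1(t) \in [-E - T/(m+n), -E]$ for every $t \geq 0$.

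I then compute $\ul{\del}(\mathbf{f})$ one period at a time. Each period $[kT, (k+1)T]$ splits, according to the breakpoints of $g_1, g_2$ and the sign changes of $g_2 - g_3$, into a ``first half'' of length $nT/(m+n)$ on which $g_1' = -1/n$ and a ``second half'' of length $mT/(m+n)$ on which $g_1' = 1/m$, together with non-collapsed transition intervals of total length $O(E)$ near the endpoints of the period. A direct computation of the intervals of equality, of $M_\pm$, and of the sets $S_\pm$ gives $\del(\mathbf{f}, \cdot) = m(n-1) = mn - m$ on the first half (both the collapsed and the non-collapsed sub-phases yield the same count, with $S_+ = \{2, \ldots, m+1\}$ and $S_- = \{1, m+2, \ldots, d\}$), while on the dominant collapsed part of the second half one gets $S_+ = \{1, \ldots, m\}$ and $S_- = \{m+1, \ldots, d\}$, producing the maximal value $\del(\mathbf{f}, \cdot) = mn$. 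Aggregating,
\[
\ul{\del}(\mathbf{f}) \;=\; \frac{(mn - m) \cdot n + mn \cdot m}{m+n} - O\!\left(\frac{E}{T}\right) \;=\; mn - \frac{mn}{m+n} - O\!\left(\frac{E}{T}\right),
\]
and the choice of $T$ guarantees $\ul{\del}(\mathbf{f}) \geq mn - mn/(m+n) - \eps/2$.

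Applying Theorem~\ref{Thm_VP} with tolerance $\eps/2$ then produces a constant $C > 0$ (a priori depending only on $\eps$, chosen before $E$) with $\dim_H \cD(\mathbf{f}, C) \geq mn - mn/(m+n) - \eps$. For any $A \in \cD(\mathbf{f}, C)$ and every $t \geq 0$, the bound $|h_1(t) - f_1(t)| \leq C$ combined with the range of $f_1$ yields
\[
e^{-E - T/(m+n) - C} \;\leq\; \lam_1(a_t u_A \bZ^d) \;\leq\; e^{-E + C}.
\]
Setting $\rho_\eps := e^{-E - T/(m+n) - C}$, $\eta_\eps := e^{-E + C}$, and $t_\eps := 0$ establishes the proposition; since $E \to \infty$ as $\eps \to 0$, both $\rho_\eps$ and $\eta_\eps$ tend to $0$, as required by the moreover clause.

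The main obstacle is the phase-by-phase computation of $\del(\mathbf{f}, \cdot)$: one must carefully enumerate the linearity pieces of $\mathbf{f}$ within a period, decide between the collapsed form $f_2 = \cdots = f_d$ and the non-collapsed form $f_2 = g_2, f_3 = \cdots = f_d = g_3$ according to the sign of $g_2 - g_3$, and determine $M_\pm$, $S_\pm$ on each interval of equality. Identifying the dominant second-half collapsed phase (with $f_1$ deep and $f_2 = \cdots = f_d$) as the source of the maximal contribution $\del = mn$ is the combinatorial heart of the argument. A secondary technicality is verifying DFSU's admissibility conditions~(9.1)--(9.3) in the regime $T \gg E$, which the preceding remark already flags as the reason for excluding $(m,n) = (1,1)$.
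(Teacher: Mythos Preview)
Your approach and the paper's are essentially the same (periodic standard template, then the uniform variational principle), and your computation of $\ul{\del}(\mathbf{f})$ agrees with theirs. However, there is a genuine gap in the way you initialize the template.

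You take $\mathbf{f}$ to be the concatenation of $\mathbf{s}[(kT,-E),((k+1)T,-E)]$ for $k\ge 0$, so in particular $f_1(0)=g_1(0)=-E$. But for every $A\in M_{m,n}(\bR)$ one has $\lambda_1(u_A\bZ^d)=1$: any nonzero lattice vector $(p+Aq,q)$ with $q\ne 0$ has sup norm $\ge\|q\|\ge 1$, and with $q=0$ has norm $\|p\|\ge 1$, while $e_1$ realizes norm $1$. Hence $h_1(0)=0$ for all $A$, and $|h_1(0)-f_1(0)|=E$. Since you choose $C=C(\eps)$ first and then take $E$ large enough that $e^{-E+C}$ is small (so in particular $E>C$), the set $\cD(\mathbf{f},C)=\{A:\|\mathbf{h}_A-\mathbf{f}\|\le C\}$ is empty, and Theorem~\ref{Thm_VP} yields nothing. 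The same obstruction shows your choice $t_\eps=0$ cannot work: at $t=0$ the inequality $\lambda_1(u_A\bZ^d)\le\eta_\eps=e^{-E+C}<1$ fails for every $A$.

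The paper circumvents this by inserting an initial ramp $\mathbf{s}[(0,0),(t,-2C_\eps)]$ so that $f_1(0)=0$, and by tying the depth to the variational constant via $E=2C_\eps$; the upper bound on $h_1$ then reads $-2C_\eps+C_\eps=-C_\eps$, valid only for $t\ge t_\eps$. To force $\eta_\eps\to 0$ the paper does not enlarge $E$ independently of $C$ (which, as above, is forbidden), but instead enlarges $C_\eps$ itself by replacing it with $\max(C_\eps,1/\eps)$, which is harmless because increasing $C$ only enlarges $\cD(\mathbf{f},C)$. Your argument can be repaired along exactly these lines, but as written the construction produces an empty matching set.
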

\begin{proof}
We will use the following notation: given any template $\mb{g}$ and interval $[T_1,T_2]$, we denote $$\Del(\mb{g},[T_1,T_2]) = \frac{1}{T_2 - T_1} \int_{T_1}^{T_2} \del(\mb{g},t)dt.$$

Fix $\eps>0$ and let $C_\eps>0$ be as in the statement of Theorem \ref{Thm_VP}. We may assume that $C_\eps \to \infty$ as $\eps \to 0$ by replacing $C_\eps$ with $\max(C_\eps, 1/\eps)$.
Fix large $t>0$ to be determined later. Define the template $\mb{f}$ by
\[\mb{f} = \begin{dcases}
\mb{s}[(0,0),(t,-2C_\eps)] & \text{ on } [0,t]\\
\mb{s}[(kt,-2C_\eps),((k+1)t,-2C_\eps)] & \text{ on } [kt,(k+1)t] \text{ for all } k\geq 1.
\end{dcases}\]
Note that if $t>0$ is large enough, then the formulas \eqref{Eq_ST1}, \eqref{Eq_ST2}, and \eqref{Eq_ST3} hold for all the above standard templates since $(m,n) \neq (1,1)$.

Following \cite[Section 9]{DFSU}, we have that for each $k\geq 1$
\[\begin{split}
\Del(\mb{f},[kt,(k+1)t])&=\Del(\mb{s}[(kt,-2C_\eps),((k+1)t,-2C_\eps)],[kt,(k+1)t])\\
&=\Del\left(\mb{s}\left[\left(0,-\frac{2C_\eps}{t}\right),\left(1,-\frac{2C_\eps}{t}\right)\right],[0,1]\right)\\
&=\Del(\mb{s}[(0,0),(1,0)],[0,1])-O\left(\frac{C_\eps}{t}\right)\\
&=mn-\frac{mn}{m+n}-O\left(\frac{C_\eps}{t}\right).
\end{split}\]
By Theorem \ref{Thm_VP}, $\dim_H(\cD(\mb{f},C_\eps)) \geq mn-\frac{mn}{m+n}-\eps-O\left(\frac{C_\eps}{t}\right)$.
Take $t=t_\eps\in\bN$ large so that $\dim_H(\cD(\mb{f},C_\eps)) \geq mn-\frac{mn}{m+n}-2\eps$.

Fix any $A \in \cD(\mb{f},C_\eps)$ and let $\mb{h}_A = (h_1,\dots,h_d)$. Observe that $$h_1(t) = f_1(t) - (f_1(t)-h_1(t)) \leq -2C_\eps + C_\eps = -C_\eps$$ for all $t\geq t_\eps$. Note that $\mb{f}$ is a bounded template. Setting $D_\eps = \max_{t} (-f_1(t))$, $$h_1(t) = f_1(t) - (f_1(t)-h_1(t))\geq -D_\eps-C_\eps$$ for all $t\geq t_\eps$. 
Hence, it follows that
\[\begin{split}
\dim_H\{A\in M_{m,n}(\bR) :  -D_\eps -C_\eps \leq \log\lam_1(a_t u_A \bZ^d) &\leq -C_\eps, \forall t\geq t_\eps \}\\
 &\geq mn-\frac{mn}{m+n}-2\eps.
\end{split}\]
Since $-D_\eps -C_\eps$ and $-C_\eps \to -\infty$ as $\eps \to 0$, taking $\rho_\eps =  e^{-D_{\eps/2} -C_{\eps/2}}$ and $\eta_\eps = e^{-C_{\eps/2}}$, we can conclude Proposition \ref{Prop_dim}.
\end{proof}

\section{Construction of high entropy measures}\label{Sec3}
In this section, we will construct an $a$-invariant probability measure on $X_{m+n}$ with high entropy supported on a certain compact region near the cusp. We basically follow the strategy as in \cite{LSS, KKL,KLP23} to construct invariant measures and make use of Proposition \ref{Prop_dim} to bound the entropy.  

Let $d=m+n$ as in Section \ref{Sec2}. Denote $G=\SL_{d}(\bR)$, $\Ga=\SL_{d}(\bZ)$, and $X=G/\Ga$. Let $d_G$ be a right invariant metric on $G$ and let $d_X$ be the metric on $X$ induced by $d_G$. 
Denote by $d_\infty$ the metric on $G$ induced by the supremum norm on $M_{d,d}(\bR)$.
Since $d_G$ and $d_\infty$ are locally bi-Lipschitz (see e.g. \cite[Lemma 9.12]{EW}), there are constants $0<r_0<1$ and $C_0\geq 1$ such that if $d_G(g,id)<r_0$ or $d_\infty(g,id)<r_0$, then
\eqlabel{Eq_biLip}{
\frac{1}{C_0}d_\infty (g,id) \leq d_G(g,id)\leq C_0 d_\infty(g,id).
}

We refer the reader to \cite[Chapters 1 and 2]{ELW} for definitions and properties of entropies. In particular, for a countable partition $\cP$ of $X$ and a probability measure $\mu$ on $X$, the entropy $H_{\mu}(\cP)$ is defined by
\[
H_\mu (\cP) = -\sum_{P\in\cP} \mu(P) \log \mu(P) \in [0,\infty]
\] where $0\log 0$ is defined to be $0$. The (dynamical) entropy $h_{\mu}(a)$ of the transformation $a$ on $X$ is defined by
\[
h_\mu (a) = \sup_{\cP: H_{\mu}(\cP)<\infty} \lim_{N\to\infty} \frac{1}{N} H_{\mu}\left(\bigvee_{k=0}^{N-1}a^{-k}\cP\right),
\]
where $\bigvee_{k=0}^{N-1}a^{-k}\cP$ denotes the join of the preimages $a^{-k}\cP$.

\begin{prop}\label{Prop_construction}
Suppose that $(m,n)\neq (1,1)$. Fix $\eps>0$ and let $\rho_\eps,\eta_\eps$ be as in Proposition \ref{Prop_dim}. Then there exists an $a$-invariant probability measure $\mu$ on $X$ such that 
\begin{enumerate}
\item $\Supp\mu \subset  \{x\in X : \rho_\eps \leq \lam_1(x) \leq \eta_\eps \}$;
\item $h_\mu(a) \geq m+n-1-\frac{m+n}{mn}\eps$.
\end{enumerate}
\end{prop}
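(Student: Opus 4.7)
The plan is to use Proposition~\ref{Prop_dim} to extract a Frostman measure on the bounded-orbit set in $M_{m,n}(\bR)$, push it into $X$, average along the $a$-orbit, and compute the entropy of a weak$^\ast$ limit via the Frostman dimension and the unstable Lyapunov exponent $\chi := (m+n)/(mn)$.

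First, I would take $E := \{A\in M_{m,n}(\bR) : \rho_\eps \leq \lam_1(a_t u_A\bZ^d) \leq \eta_\eps \ \forall t \geq t_\eps\}$, with $\dim_H E \geq s_\eps := mn - \tfrac{mn}{m+n} - \eps$ by Proposition~\ref{Prop_dim}. After intersecting with a bounded box (preserving a piece of full dimension), Frostman's lemma yields, for any $s' < s_\eps$, a Borel probability measure $\nu$ on $E$ with $\nu(B(A,r)) \leq Cr^{s'}$ for $r \leq 1$. Pushing forward via $\iota\colon A \mapsto u_A\bZ^d$ (locally bi-Lipschitz by \eqref{Eq_biLip}) gives a probability measure $\tilde\nu$ on a bounded piece of the unstable leaf $U \cdot \bZ^d$, inheriting the Frostman bound in the $d_X$-metric.

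Next, I would form the averages
\[
\mu_N := \frac{1}{N}\sum_{k=0}^{N-1}(a^{t_\eps+k})_*\tilde\nu,
\]
each supported on the compact (by Mahler's criterion) set $K_\eps := \{x\in X : \rho_\eps \leq \lam_1(x) \leq \eta_\eps\}$. A weak$^\ast$ subsequential limit $\mu = \lim_j \mu_{N_j}$ is a probability measure with $\Supp\mu \subset K_\eps$, and the telescoping bound $\|\mu_N - a_*\mu_N\|_{\mathrm{TV}} \leq 2/N$ yields $a$-invariance of $\mu$; this establishes~(1).

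For (2), I would fix a finite partition $\cP$ of a neighborhood of $K_\eps$ of diameter $\delta_0$ with $\mu(\partial\cP) = 0$, taking $\delta_0$ small enough that each atom meets $\Supp\tilde\nu$ in at most one piece (within a local injectivity neighborhood) and $|\cP|$ large enough that $\log|\cP| \geq s'\chi$. Atoms of $a^{-k}\cP^M$, with $\cP^M := \bigvee_{j=0}^{M-1}a^{-j}\cP$, have unstable-direction diameter at most $\delta_0 e^{-(k+M-1)\chi}$; the Frostman bound then gives $\tilde\nu(P) \leq C(\delta_0 e^{-(k+M-1)\chi})^{s'}$ for each such atom, so
\[
H_{\tilde\nu}\bigl(a^{-k}\cP^M\bigr) \geq -\log \max_P \tilde\nu(P) \geq s'M\chi + O(1)
\]
uniformly in $k \geq 0$. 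Concavity of $\mu \mapsto H_\mu(\cP^M)$ gives $H_{\mu_N}(\cP^M) \geq s'M\chi + O(1)$; weak$^\ast$ continuity (with $\cP$ having $\mu$-null boundary and all masses in the common compact $K_\eps$) extends this to $H_\mu(\cP^M) \geq s'M\chi + O(1)$. Dividing by $M$ and letting $M \to \infty$ gives $h_\mu(a) \geq s'\chi$; choosing $s'$ arbitrarily close to $s_\eps$ yields $h_\mu(a) \geq m+n-1 - \tfrac{m+n}{mn}\eps$ as required. The hard step here is ensuring the uniformity in $k$ of the Frostman estimate on atoms of $a^{-k}\cP^M$, which requires $\Supp\tilde\nu$ to lie inside a local injectivity neighborhood so that each such (stable-direction-long but unstable-direction-thin) atom intersects it in a single unstable piece; passing from $H_{\mu_N}$ to $H_\mu$ via weak$^\ast$ continuity is then routine. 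The overall strategy follows \cite{LSS, KKL, KLP23}.
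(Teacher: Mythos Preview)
There is a genuine gap in the entropy step. Your key inequality $\tilde\nu(P)\le C(\delta_0 e^{-(k+M-1)\chi})^{s'}$ for an atom $P$ of $a^{-k}\cP^M$ relies on $P\cap\Supp\tilde\nu$ being contained in a \emph{single} unstable ball of radius $\sim\delta_0 e^{-(k+M-1)\chi}$, since the Frostman bound applies only to balls. But the condition ``$a^{k+j}u_A\bZ^d$ and $a^{k+j}u_B\bZ^d$ lie in the same $\cP$-atom for $j=0,\dots,M-1$'' only says $d_X(a^{k+j}u_A\bZ^d,a^{k+j}u_B\bZ^d)<\delta_0$, and for large $k$ this can hold via a nontrivial $\gamma\in\Ga$ even when $\|A-B\|$ is of order one. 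Placing $\Supp\tilde\nu$ in an injectivity neighborhood at time $0$ does nothing to rule out nontrivial deck translates at time $k$; the partition $a^{-k}\cP^M$ carries no information about times $0,\dots,k-1$, so there is no way to pin down $\gamma=\mathrm{id}$ and propagate it forward. Thus $P\cap\Supp\tilde\nu$ may consist of many pieces, and the uniform-in-$k$ bound $H_{\tilde\nu}(a^{-k}\cP^M)\ge s'M\chi+O(1)$ is unjustified.

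The paper avoids this in two moves. It works with $\cP^{(N)}=\bigvee_{j=0}^{N-1}a^{-j}\cP$, which uses all times from $0$; Lemma~\ref{Lem_count} then runs an inductive injectivity argument starting at $k=0$ (where $\gamma=\mathrm{id}$ is forced because $A\mapsto u_A\Ga$ is injective on the torus) and bootstraps step by step to show each $\cP^{(N)}$-atom meets the separated set in at most one point. Second, it passes from $\frac{1}{N}H_{\nu_N}(\cP^{(N)})$ to $\frac{1}{q}H_{\mu_N}(\cP^{(q)})$ via the Misiurewicz subadditivity trick, not by bounding each $H_{(a^k)_*\nu}(\cP^{(q)})$ separately and invoking concavity. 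Your Frostman-measure variant can be salvaged along the same lines (bound $H_{\tilde\nu}(\cP^{(N)})\ge s'N\chi-O(1)$ via a Lemma~\ref{Lem_count}-style argument, then apply subadditivity), but the direct concavity route you outline does not close.
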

\begin{rem}\label{rem_compact}
The set $\{x\in X : \rho_\eps \leq \lam_1(x) \leq \eta_\eps \}$ is a compact subset of $X$ by Mahler's compactness criterion.
\end{rem}
\begin{proof}[Proof of Proposition \ref{Prop_construction}]
Given $\eps>0$, let $\rho_\eps,\eta_\eps>0$ and $t_\eps\in\bN$ be as in Proposition \ref{Prop_dim}. Denote 
\[
E(\eps)=\{A\in \bT^{mn}: \eta_{\eps}\leq \lam_1(a_t u_A \bZ^{d}) \leq \rho_\eps \text{ for all }t\geq t_\eps\},
\] where $\bT^{mn}=\bR^{mn}/\bZ^{mn}$ with the metric $\|\cdot\|$ on $\bT^{mn}$ induced by the supremum norm $\|\cdot\|$ on $\bR^{mn}=M_{m,n}(\bR)$. 
For each $N\in\bN$, let $S_N$ be a maximal $e^{-(\frac{1}{m}+\frac{1}{n})N}$-separated subset of $E(\eps)$. 
Since the value $\lam_1(a_t u_A \bZ^d)$ is invariant under $\bZ^{mn}$-translations, i.e. $\lam_1(a_t u_{A+Z} \bZ^d)=\lam_1(a_t u_A \bZ^d)$ for any $Z\in\bZ^{mn}$, it follows from
Proposition \ref{Prop_dim} that
\eqlabel{Eq_BoxDim}{
\liminf_{N\to\infty} \frac{\log |S_N|}{(\frac{1}{m}+\frac{1}{n})N} \geq \dim_H E(\eps) \geq mn-\frac{mn}{m+n}-\eps,
}
Define the measures $\nu_N$ and $\mu_N$ by 
\[
\nu_N =\frac{1}{|S_N|} \sum_{A\in S_N} \del_{u_A\bZ^d} \quad\text{and}\quad 
\mu_N=\frac{1}{N}\sum_{k=0}^{N-1} a^k_{\ast}\nu_N.
\]
Extracting a subsequences if necessary, we may assume that the sequence $(\mu_N)_{N\in\bN}$ converges weak$^\ast$ to some measure $\mu$ on $X$. Note that $\mu$ could be not a probability measure since $X$ is non-compact.

First, observe that $\mu$ is clearly $a$-invariant since $a_\ast \mu_N - \mu_N$ goes to the zero measure. Next, we claim that $\mu$ is a probability measure on $X$ and 
$$\Supp\mu \subset  \{x\in X : \rho_\eps \leq \lam_1(x) \leq \eta_\eps \}.$$
Indeed, writing $X(\eps) = \{x\in X : \rho_\eps \leq \lam_1(x) \leq \eta_\eps \}$,
it follows from the definition of $E(\eps)$ that
\[
\mu_N(X\smallsetminus X(\eps)) = \frac{1}{N|S_N|} \sum_{A\in S_N}\sum_{k=0}^{N-1} \del_{a^k u_A \bZ^{d}} (X\smallsetminus X(\eps))
\leq \frac{t_\eps}{N}.
\]
By taking $N\to\infty$, we have $\mu(X\smallsetminus X(\eps)) = 0$ and the claim follows since $X(\eps)$ is compact in $X$.

Finally, we claim that
\eq{
h_\mu (a) \geq m+n-1- \frac{m+n}{mn}\eps.
}
In order to prove the claim, we fix small $r>0$ which will be determined 
and consider a finite partition $\cP$ of $X$ satisfying
\begin{itemize}
\item $\cP$ contains an atom $P_\infty$ such that $X\smallsetminus P_\infty$ contains $X(\eps)$ and has compact closure;
\item For any $P\in \cP\smallsetminus \{P_\infty\}$, $\diam P<r$;
\item For any $P \in \cP$, the boundary of $P$ is $\mu$-null, i.e. $\mu(\partial P) =0$.
\end{itemize}
We can build such $\cP$ following the procedure in the proof of \cite[Proposition 2.3]{LSS}.
For any $q\in\bN$, let $\cP^{(q)} = \bigvee_{k=0}^{q-1}a^{-k}\cP$. Write the Euclidean division of large enough $N-1$ by $q$ as
$$N-1=qN' +s \text{ with } s\in \{0,\dots,q-1\}. $$ 
By subadditivity of entropy of the partition it follows that for each $p\in\{0,\dots,q-1\}$ 
\[
H_{\nu_N}(\cP^{(N)}) \leq H_{a^{p}\nu_N}(\cP^{(q)}) +H_{a^{p+q}\nu_N}(\cP^{(q)})+\cdots+H_{a^{p+qN'}\nu_N}(\cP^{(q)})+2q\log|\cP|.
\]
Summing those inequalities for $p=0,\dots,q-1$, and using concavity of entropy of the measure, it follows that
\[
q H_{\nu_N}(\cP^{(N)}) \leq \sum_{k=0}^{N-1}H_{a^k \nu_N} (\cP^{(q)}) + 2q^2 \log|\cP| \leq N H_{\mu_N}(\cP^{(q)}) +2q^2\log|\cP|.
\]
Therefore, we have
\eqlabel{Eq_entropybound}{
\frac{1}{q} H_{\mu_{N}} (\cP^{(q)}) \geq \frac{1}{N}H_{\nu_N}(\cP^{(N)}) -\frac{2q\log|\cP|}{N}.
}

Now we will take $r >0$ small enough. For this denote by $r_1$ the injectivity radius of $X\smallsetminus P_\infty$. See \cite[Proposition 9.14]{EW} for the definition and existence of an injectivity radius. 
Recall the constants $0<r_0<1$ and $C_0\geq 1$ given in the beginning of Section \ref{Sec3}. 
We take $r < \min\{r_0, r_1,C_0^{-1}e^{-\left(\frac{1}{m}+\frac{1}{n}\right)}\}$ and claim that for any non-empty atom $P$ of $\cP^{(N)}$, there is at most one element $A\in S_N$ such that $u_A\Ga \in P$.

To prove the claim, suppose that $A_1,A_2\in S_N$ satisfy $u_{A_1}\Ga, u_{A_2}\Ga \in P$, that is, $a^ku_{A_1}\Ga, a^k u_{A_2}\Ga$ are contained in the same atom of $\cP$ for each $k=0,\dots,N-1$. 
It follows from $A_1,A_2 \in E(\eps)$ that $a^k u_{A_1}\Ga, a^k u_{A_2}\Ga \notin P_\infty$ for all $k\geq t_\eps$. In particular, we have
$d_X(a^{N-1} u_{A_1} \Ga, a^{N-1} u_{A_2} \Ga) <r$, which implies $d_G(a^{N-1}u_{A_1-A_2}a^{-(N-1)},id) <r$ due to $r<r_1$.
It follows from $r<r_0$ that
\[
d_\infty(a^{N-1}u_{A_1-A_2}a^{-(N-1)},id)=e^{\left(\frac{1}{m}+\frac{1}{n}\right)(N-1)}\|A_1-A_2\| < C_0 r,
\]
hence $\|A_1-A_2\| < C_0 r e^{-\left(\frac{1}{m}+\frac{1}{n}\right)(N-1)}$. Since $ r< C_0^{-1}e^{-\left(\frac{1}{m}+\frac{1}{n}\right)}$ and $A_1,A_2$ are $e^{-\left(\frac{1}{m}+\frac{1}{n}\right)N}$-separated, it follows that $A_1=A_2$, which completes the proof of the claim.

It follows from \eqref{Eq_entropybound} and the above claim that 
\[
\frac{1}{q}H_{\mu_N}(\cP^{(q)}) \geq \frac{\log |S_N|}{N} -\frac{2q\log|\cP|}{N}.
\]
Since the boundary of the atoms of $\cP$, hence of $\cP^{(q)}$, is of zero $\mu$-measure, taking $N\to\infty$ and using \eqref{Eq_BoxDim}, we have
\[
\frac{1}{q}H_{\mu}(\cP^{(q)}) \geq m+n-1-\frac{m+n}{mn}\eps.
\] Therefore, we have
\[
h_\mu(a) \geq m+n-1-\frac{m+n}{mn}\eps.
\]
\end{proof}

\section{Proofs of Theorem \ref{Thm_main} and Corollary \ref{Cor_main}}\label{Sec4}
\begin{proof}[Proof of Theorem \ref{Thm_main}]
When $(m,n)=(1,1)$, it follows from \cite[Theorem 1.2]{Kad12}. Assume $(m,n)\neq (1,1)$. Using Proposition \ref{Prop_construction} with $\eps=1/k$ for $k\in\bN$, we obtain a sequence of $a$-invariant probability measures $(\mu_k)_{k\geq 1}$ on $X$ such that $\Supp \mu_k \subset \{x\in X : \rho_{1/k} \leq \lam_1(x) \leq \eta_{1/k}\}$ and $h_{\mu_k}(a)\geq m+n-1 -\frac{m+n}{mnk}$. Hence, it follows that $\liminf_{k\to\infty}h_{\mu_k}(a) \geq m+n -1$ and any weak$^\ast$ limit measure is the zero measure since $\rho_{1/k}, \eta_{1/k}\to 0$ as $k\to\infty$. 

On the other hand, if $\limsup_{k\to\infty}h_{\mu_k}(a)>m+n-1$, it follows from Theorem \ref{Thm_KKLM} that there is a subsequence of $(\mu_k)_{k\geq 1}$ such that any weak$^\ast$ limit measure along this subsequence cannot be the zero measure, which is a contradiction. Therefore, $\lim_{k\to\infty}h_{\mu_k}(a) = m+n -1$.
\end{proof}

\begin{proof}[Proof of Corollary \ref{Cor_main}]
Let $(\mu_k)_{k\geq 1}$ be as in Theorem \ref{Thm_main}. Denote by $m_X$ the Haar probability measure on $X$. 
Note that $h_{m_X} (a) = m+n$ (see e.g. \cite[Theorem 7.9]{EL}). Given $h\in [m+n-1,m+n]$, 
define $$\nu_k = (h-(m+n-1))m_X + (m+n-h)\mu_k.$$ 
It follows from \cite[Theorem 2.33]{ELW} that $$h_{\nu_k}(a)= (h-(m+n-1))h_{m_X}(a) + (m+n-h)h_{\mu_k}(a),$$ hence $\lim_{k\to\infty}h_{\nu_k}(a) =h$. Since the weak$^\ast$ limit measure of $(\nu_k)_{k\geq 1}$ is $(h-(m+n-1))m_X$, this completes the proof.
\end{proof}


\def\cprime{$'$} \def\cprime{$'$} \def\cprime{$'$}
\providecommand{\bysame}{\leavevmode\hbox to3em{\hrulefill}\thinspace}
\providecommand{\MR}{\relax\ifhmode\unskip\space\fi MR }

\end{document}